\documentclass[11pt, a4paper]{amsart}

\usepackage{amsmath}
\usepackage{amsfonts}
\usepackage{amssymb}
\usepackage{amsthm}
\usepackage[active]{srcltx}

\numberwithin{figure}{section}

\newtheorem{theorem}{Theorem}[section]
\newtheorem{lemma}[theorem]{Lemma}

\theoremstyle{definition}
\newtheorem{definition}[theorem]{Definition}
\newtheorem{example}[theorem]{Example}

\numberwithin{equation}{section}

\hyphenation{super-para-bolic}

\def\kint_#1{\mathchoice%
          {\mathop{\kern 0.2em\vrule width 0.6em height 0.69678ex depth -0.58065ex
                  \kern -0.8em \intop}\nolimits_{\kern -0.4em#1}}%
          {\mathop{\kern 0.1em\vrule width 0.5em height 0.69678ex depth -0.60387ex
                  \kern -0.6em \intop}\nolimits_{#1}}%
          {\mathop{\kern 0.1em\vrule width 0.5em height 0.69678ex depth -0.60387ex
                  \kern -0.6em \intop}\nolimits_{#1}}%
          {\mathop{\kern 0.1em\vrule width 0.5em height 0.69678ex depth -0.60387ex
                  \kern -0.6em \intop}\nolimits_{#1}}}
\def\vintslides_#1{\mathchoice%
          {\mathop{\kern 0.1em\vrule width 0.5em height 0.697ex depth -0.581ex
                  \kern -0.6em \intop}\nolimits_{\kern -0.4em#1}}%
          {\mathop{\kern 0.1em\vrule width 0.3em height 0.697ex depth -0.604ex
                  \kern -0.4em \intop}\nolimits_{#1}}%
          {\mathop{\kern 0.1em\vrule width 0.3em height 0.697ex depth -0.604ex
                  \kern -0.4em \intop}\nolimits_{#1}}%
          {\mathop{\kern 0.1em\vrule width 0.3em height 0.697ex depth -0.604ex
                  \kern -0.4em \intop}\nolimits_{#1}}}

\newcommand{\R}{\mathbb{R}}

\renewcommand{\limsup}{\operatornamewithlimits{lim \, sup}}

\def\XXint#1#2#3{{\setbox0=\hbox{$#1{#2#3}{\int}$}
\vcenter{\hbox{$#2#3$}}\kern-.5\wd0}}

\DeclareMathOperator*{\loc}{loc}
\DeclareMathOperator*{\lip}{Lip}
\DeclareMathOperator*{\supp}{supp}

\title[]{Parabolic comparison principle and quasiminimizers in metric measure spaces}
\author{Juha Kinnunen and Mathias Masson}


\begin{document}
\begin{abstract}
We give several characterizations of parabolic (quasisuper)-mi\-nimizers in a metric measure space equipped with a doubling measure and supporting a Poincar\'e inequality. We also prove a version of comparison principle for super- and subminimizers on parabolic space-time cylinders and a uniqueness result for minimizers of a boundary value problem. We also give an example showing that the corresponding results do not hold, in general, for quasiminimizers even in the Euclidean case.
\end{abstract}

\subjclass[2010]{30L99, 35K92}
\maketitle

\section{Introduction}

This note studies properties of quasiminimimizers of parabolic variational inequalities of the $p$-Laplacian type in metric measure spaces. In the Euclidean case the prototype equation is 
\[
\frac{\partial u}{\partial t}-\textrm{div}(|Du|^{p-2}Du)= 0,
\qquad 1<p<\infty,
\]
and the corresponding variational inequality 
\[
p\iint u\frac{\partial\phi}{\partial t}\,dx\,dt
+\iint|Du|^p\,dx\,dt
\le\iint|Du+D\phi|^p\,dx\,dt
\]
for every compactly supported test function $\phi$. 
Roughly speaking, a quasiminimizer is a function which satisfies this condition up to multiplicative constants.
The precise definitions will be given later. Our results and methods also hold for more general variational
kernels of the $p$-Laplacian type, but as we shall see, in the class of quasiminimizers it seems to be enough
to consider the prototype case.

The main purpose of the study of quasiminimizers is to provide arguments in the calculus of variations that are only
based on energy estimates and, consequently, are applicable in more general contexts than Euclidean spaces. In particular, we do not always have the correspoding non-linear partial differential equation available. This is a challenging problem already in the Euclidean case with the Lebesgue measure and our results are relevant already in that case. In the elliptic case 
quasiminimizers were introduced by Giaquinta and Giusti in \cite{GG82} and \cite{GG84} and in the parabolic case by Wieser in \cite{W87}. See also \cite{Z93} and \cite{Z94}. First they were used a tool in the regularity theory, but later it has turned out that they have theory which is of independent interest, see \cite{B06}, \cite{KM03} and \cite{KS01}. 

Until recently, most of the results have been obtained for elliptic quasiminimizers. Recent papers \cite{KMMP12},
\cite{MMPP13} and \cite{MS11} extend the regularity theory of quasiminimizers to metric measure spaces with a doubling measure
and a Poincar\'e inequality. These are rather standard assumptions in analysis on metric measure spaces and
they are also assumed to hold throughout this note. We point out that there is a large literature on time independent variational problems in metric measure spaces, but  parabolic quasiminimizers open an opportunuty
to consider more general time dependent variational problems. So far there are only few references and many fundamental open problems remain. 

In this note, we shall focus on two aspects. First, we give several characterizations of parabolic quasisupermininimizers extending the results in \cite{KM03} and \cite{B06}.
We show, by an example, that quasiminimizers do not have comparison principle and a boundary value problem does not have a unique solution, in general.  However, if we restrict our attention to the minimizers, that are quasiminimizers with the constant one, then we have a parabolic comparison principle for super- and subminimizers and a uniqueness result for a boundary value problem for the minimizers. We prove these
results in the second part of the paper. It has been shown in \cite{W87} that the minimizers have a certain
amount of regularity in time. More precisely, the time derivative of a quasiminimizer belongs to the dual of the corresponding  parabolic Sobolev space. The corresponging result does not hold for super- and subminimizers
and a delicate smoothing argument is needed.
In the Euclidean case with the Lebesgue measure, the comparision principle is not only sufficient but also necessary condition for a function to be a superminimizer, but in the metric space the
theory for the parabolic obstacle problems is currently missing, see \cite{KL06}, \cite{KKP10} and \cite{KKS09}. Our uniqueness result extends results of \cite{W87} to the metric setting.

\section{preliminaries}
\subsection{Doubling measure.}
Let $(X,d)$ be a complete metric space.
A measure $\mu$ is said to be doubling if there exists a universal constant $C_\mu\ge 1$ such that
\[
 \mu(B(x,2r))\le C_\mu \mu(B(x,r)),
\]
for every $r>0$ and $x\in X$. Here $B(x,r)$ denotes the open ball with the center $x$ and radius $r$. We assume that the measure is nontrivial in the sense that $0<\mu(B(x,r))<\infty$ for every $x\in X$ and $r>0$.
Recall that a complete metric space with a doubling measure is proper, that is, every closed and bounded set is
compact.

\subsection{Upper gradients.} We assume that $\Omega$ is an open and bounded subset of $X$, although the boundedness is not always needed. Following \cite{HeinKosk98}, a non-negative Borel measurable function $g: \Omega \rightarrow [0, \infty]$ is said to be an upper gradient of a function $u: \Omega\rightarrow [-\infty, \infty]$ in $\Omega$, if for all compact rectifiable paths $\gamma$ joining $x$ and $y$ in $\Omega$ we have 
\begin{align}\label{upper gradient}
|u(x)-u(y)|\leq \int_{\gamma} g \,ds.
\end{align}
In case $u(x)=u(y)=\infty$ or $u(x)=u(y)=-\infty$, the left side is defined to be $\infty$. 
Throughout the paper we assume that $1<p<\infty$. A property is said to hold for $p$-almost all paths, if the set of non-constant paths for which the property fails is of zero $p$-modulus. Following \cite{Shan00}, if \eqref{upper gradient} holds for $p$-almost all paths $\gamma$ in $X$, then $g$ is said to be a $p$-weak upper gradient of $u$. 
This refinement is related to the the existence of a minimimal weak upper gradient and otherwise it does not play any other role in our study.

When $u$ is a measurable function and it has an upper gradient $g\in L^p(\Omega)$, it can be shown \cite{Shan01} that  there exists a minimal $p$-weak upper gradient of $u$, we denote it by $g_u$, in the sense that $g_u$ is a $p$-weak upper gradient of $u$
and for every $p$-weak upper gradient $g$ of $u$ it holds $g_u\leq g$ $\mu$-almost everywhere in $\Omega$. 
Moreover, if $v=u$ $\mu$-almost everywhere in a Borel set $E\subset \Omega$, then $g_v=g_u$ $\mu$-almost everywhere in $E$. 
For more on upper gradients in metric spaces we refer to  \cite{BB11} and \cite{Hein01}.

\subsection{Newtonian spaces.} Following \cite{Shan00}, for $u\in L^p(\Omega)$, we define
\begin{align*}
\|u\|_{1,p,\Omega}=\|u\|_{L^p(\Omega,\mu)}+\|g_u\|_{L^p(\Omega,\mu)},
\end{align*}
and $\widetilde N^{1.p}(\Omega)= \{ u\,:\, \|u\|_{1,p,\Omega}<\infty\}$.
An equivalence relation in $\widetilde N^{1,p}(\Omega)$ is defined by saying that $u\sim v$ if
$\|u-v\|_{\widetilde N^{1,p}(\Omega)}=0$.
The \emph{Newtonian space} $N^{1,p}(\Omega)$ is defined to be the space $\widetilde N^{1,p}(\Omega)/ \sim$, with the norm defined above.

A function $u$ belongs to the local Newtonian space $N_{\loc}^{1,p}(\Omega)$ if it belongs to $N^{1,p}(\Omega')$ for every $\Omega' \Subset \Omega$. 
The notation $\Omega' \Subset \Omega$ means that $\overline{\Omega'}$ is a compact subset of $\Omega$.
The Newtonian space with zero boundary values is defined as 
\[
N_0^{1,p}(\Omega)=\{u|_\Omega: u\in N^{1,p}(X),u=0\text{ in }X\setminus\Omega\}.
\]
In practice, this means that a function belongs to $N_0^{1,p}(\Omega)$ if and only if its zero extension to $X\setminus\Omega$ belongs to $N^{1,p}(X)$. 
For more properties of Newtonian spaces, see \cite{Shan00, KKM00, BB11, Hein01}.

\subsection{Poincar\'e's inequality}
The space $X$ is said to support a weak $(1,p)$-Poincar\'e inequality with $1\leq p < \infty$, if there exist constants $P_0>0$ and $\tau\ge 1$ such that
\[
\frac{1}{\mu(B(x,r))}\int_{B(x,r)}|u-u_{B(x,r)}|\,d\mu
\le P_0 r\left(\frac{1}{\mu(B(x,\tau r))}\int_{B(x,\tau r)} g_u^p \, d\mu\right)^{1/p},
\]
for every $u\in N^{1,p}(X)$, $x\in X$ and $r>0$.
If the measure is doubling and the space supports a $(1,p)$-Poincar\'e inequality, then Lipschitz continuous
functions are dense in the Newtonian space. This will be useful for us, since the test functions in the
definition of quasiminimizers are assumed to be Lipschitz continuous.

\subsection{General assumptions}
Throughout this paper we assume $(X,d,\mu)$ to be a complete metric space, equipped with a positive doubling Borel measure $\mu$ which  supports a weak $(1,p)$-Poincar\'e inequality for some $1\leq p<\infty$. 

\subsection{Parabolic upper gradients and Newtonian spaces}
 From now on we will denote the product measure by $d\nu=d\mu\, dt$. Whenever $t$ is such that $u(\cdot,t)\in N^{1,p}(\Omega)$, we define the parabolic minimal $p$-weak upper gradient of $u$ in a natural way by setting
\[
g_u(x,t)=g_{u(\cdot,t)}(x), 
\]
at $\nu$-almost every $(x,t)\in \Omega\times (0,T)=\Omega_T$ and we call the parabolic minimal $p$-weak upper gradient as the upper gradient. 

We define the parabolic Newtonian space  $L^p(0,T;N^{1,p}(\Omega))$ to be the space of functions $u(x,t)$ such that for almost every $0<t<T$ the function $u(\cdot, t)$ belongs to $N^{1,p}(\Omega)$, and
\[
\int\limits_{0}^{T}\|u(\cdot,t)\|_{1,p,\Omega}^p\, dt < \infty.
\]
We say that $u\in L_{\loc}^p(0,T;N_{\loc}^{1,p}(\Omega))$ if for every $0<t_1<t_2<T$ and $\Omega'\Subset\Omega$ we have $u\in L^p(t_1,t_2;N^{1,p}(\Omega'))$.
Finally, we say that $u\in L_{\textrm{c}}^p(0,T;N^{1,p}(\Omega))$ if for some $0<t_1<t_2<T$, we have $u(\cdot,t)=0$ outside $[t_1,t_2]$.

The next Lemma on taking limits of upper gradients will also be needed later in this paper.
\begin{lemma}\label{convergence of upper gradient} Let $u$ be such that $g_u\in L_{\loc}^p(\Omega_T)$. Then the following statements hold:
\begin{itemize} \item[(a)] As $s\rightarrow 0$, we have $g_{u(x,t-s)-u(x,t)}\rightarrow 0$ in $L_{\loc}^p(\Omega_T)$.
\item[(b)] As $\varepsilon\rightarrow 0$, we have $g_{u_\varepsilon -u} \rightarrow 0$ pointwise $\nu$-almost everywhere in $\Omega_T$ and in $L_{\loc}^p(\Omega_T)$.
\end{itemize}
\begin{proof}
See Lemma 6.8 in \cite{MS11}. 
\end{proof}

\end{lemma}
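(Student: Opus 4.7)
The plan is to exploit two standard properties of minimal $p$-weak upper gradients in the spatial variable, combined with continuity of translation in vector-valued $L^p$ spaces. Recall that for $f,h\in N^{1,p}(\Omega')$ one has the sublinearity $g_{f+h}\le g_f+g_h$ $\mu$-a.e., and if $f_k\to f$ in $N^{1,p}(\Omega')$ then $g_{f_k-f}\to 0$ in $L^p(\Omega')$. In particular $\|g_w\|_{L^p(\Omega')}\le\|w\|_{1,p,\Omega'}$ for every $w\in N^{1,p}(\Omega')$. The idea is to transfer the time-translation and mollification continuity from the Bochner space $L^p(I;N^{1,p}(\Omega'))$ to the spatial upper gradient of the difference.

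For part (a), I would fix a subcylinder $K=\Omega'\times[t_1,t_2]\Subset\Omega_T$, choose slightly larger $\Omega''\Subset\Omega$ and $(t_1',t_2')\supset[t_1,t_2]$, and use $g_u\in L^p_{\loc}(\Omega_T)$ together with the standing assumptions to deduce $u\in L^p((t_1',t_2');N^{1,p}(\Omega''))$. Continuity of translation in Banach-space-valued $L^p$ then yields
\[
\int_{t_1}^{t_2}\|u(\cdot,t-s)-u(\cdot,t)\|_{1,p,\Omega'}^p\,dt\to 0\qquad\text{as }s\to 0.
\]
Since the $N^{1,p}$-norm of the difference dominates the $L^p(\Omega')$-norm of $g_{u(\cdot,t-s)-u(\cdot,t)}$, integrating in $t$ gives the conclusion of (a) on $K$, and $K$ was arbitrary.

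For part (b), $u_\varepsilon$ denotes the standard time mollification $u_\varepsilon(x,t)=\int\rho_\varepsilon(s)u(x,t-s)\,ds$. Writing
\[
u_\varepsilon(x,t)-u(x,t)=\int\rho_\varepsilon(s)\bigl(u(x,t-s)-u(x,t)\bigr)\,ds,
\]
an integral Minkowski-type inequality for the minimal upper gradient, obtained by approximating the integral by finite convex combinations and passing to the limit using the $N^{1,p}$-continuity of minimal upper gradients, yields
\[
g_{u_\varepsilon-u}(x,t)\le\int\rho_\varepsilon(s)\,g_{u(\cdot,t-s)-u(\cdot,t)}(x)\,ds
\]
for $\nu$-a.e.\ $(x,t)\in K$. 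The $L^p_{\loc}$ convergence then follows by Minkowski in $L^p(K)$ combined with part (a), while the pointwise $\nu$-a.e.\ convergence follows from standard mollifier arguments applied at Lebesgue points of the maps $t\mapsto g_u(x,t)$ and $t\mapsto u(x,t)$, where the right-hand side is seen to tend to $0$.

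The main technical obstacle is the Minkowski-type inequality for the minimal $p$-weak upper gradient under integral averaging used in (b). Pointwise subadditivity $g_{f+h}\le g_f+g_h$ is elementary, but since minimal upper gradients do not depend linearly on the function, extending this to an integral convolution requires approximation by Riemann-sum–type finite convex combinations and a limit passage that relies on the fact that $N^{1,p}$-convergence implies $L^p$-convergence of minimal upper gradients. This is precisely the content of Lemma~6.8 in \cite{MS11} that the proof cites, and it is there that the main effort is invested.
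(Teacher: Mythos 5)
The paper gives no argument for this lemma at all; it is imported verbatim from Lemma 6.8 of \cite{MS11}, so there is no in-paper proof to compare against and your sketch has to stand on its own. Most of it does. Part (a) is sound modulo one point you pass over quickly: $L^p(0,T;N^{1,p}(\Omega))$ is defined in the paper slicewise, not as a Bochner space, so before invoking continuity of translations for Banach-space-valued $L^p$ functions you must verify strong (Bochner) measurability of $t\mapsto u(\cdot,t)$ into $N^{1,p}(\Omega'')$; under the standing doubling and Poincar\'e hypotheses this can be arranged, but it is a step, not a formality. Granting it, the chain $\|g_{u(\cdot,t-s)-u(\cdot,t)}\|_{L^p(\Omega')}\le\|u(\cdot,t-s)-u(\cdot,t)\|_{1,p,\Omega'}$ plus translation continuity gives (a), and your convolution inequality $g_{u_\varepsilon-u}\le\int\rho_\varepsilon(s)\,g_{u(\cdot,t-s)-u(\cdot,t)}\,ds$ is legitimately obtained by Riemann sums, since $|g_{v}-g_{w}|\le g_{v-w}$ makes $v\mapsto g_v$ $1$-Lipschitz from $N^{1,p}(\Omega')$ to $L^p(\Omega')$; together with (a) and Minkowski this yields the $L^p_{\loc}$ half of (b).

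The genuine gap is your justification of the pointwise $\nu$-a.e.\ convergence in (b). You propose to make the majorant $\int\rho_\varepsilon(s)\,g_{u(\cdot,t-s)-u(\cdot,t)}(x)\,ds$ small at Lebesgue points of $t\mapsto g_u(x,t)$ and $t\mapsto u(x,t)$. This cannot work as stated: subadditivity gives $g_{u(\cdot,t-s)-u(\cdot,t)}\ge|g_{u(\cdot,t-s)}-g_{u(\cdot,t)}|$, so the quantity you must control \emph{dominates} the oscillation of $g_u$ in time rather than being dominated by it, and being a Lebesgue point of $t\mapsto u(x,t)$ controls only the values $|u(x,t-s)-u(x,t)|$, which say nothing about the minimal upper gradient of the difference. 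The Lebesgue-point argument that does work is the vector-valued one: for a.e.\ $t$ one has $\frac{1}{2\varepsilon}\int_{t-\varepsilon}^{t+\varepsilon}\|u(\cdot,\tau)-u(\cdot,t)\|_{1,p,\Omega'}\,d\tau\to0$, whence $g_{u_\varepsilon(\cdot,t)-u(\cdot,t)}\to0$ in $L^p(\Omega',\mu)$ for a.e.\ fixed $t$; but that upgrades to $\mu$-a.e.\ convergence in $x$ only along a subsequence of $\varepsilon$, a priori depending on $t$, so the full pointwise claim still needs a further argument (or should be read along subsequences --- which is all that the application in the comparison theorem actually uses, since there only $\limsup$ bounds on integrals of $(g^p_{\cdot})_\varepsilon$ are needed). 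That missing step is precisely where the content of the cited Lemma 6.8 lies.
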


\section{Parabolic quasiminimizers}

In this section we define parabolic quasiminimizers in metric measure spaces and give several characterizations for them. We begin with a brief discussion about the Euclidean case.

\subsection{Euclidean case} 
There is a variational approach to the $p$-parabolic equation
\begin{equation}\label{ppe} 
\frac{\partial u}{\partial t}-\textrm{div}(|Du|^{p-2}Du)= 0,
\qquad 1<p<\infty.
\end{equation}
To see this, first assume that $u\in L^p_{\loc}(0,T;W^{1,p}_{\loc}(\Omega))\cap L^2(\Omega_T)$ 
is a weak solution of \eqref{ppe} and denote $dz=dx\,dt$.
Let $U\Subset\Omega_T$ and $\phi\in C^\infty_0(U)$. 
Then
\[
\begin{split}
\int_U|Du|^p\,dz
&=\int_U|Du|^{p-2}Du\cdot Du\,dz
\\
&=\int_U|Du|^{p-2}Du\cdot (Du+D\phi)\,dz
-\int_Uu\frac{\partial\phi}{\partial t}\,dz.
\end{split}
\]
from which it follows that
\begin{equation}\label{qme}
p\int_Uu\frac{\partial\phi}{\partial t}\,dz
+\int_U|Du|^p\,dz
\le\int_U|Du+D\phi|^p\,dz.
\end{equation}
In the last step we used Young's inequality and rearranged terms.

Assume then that  \eqref{qme} holds for every $U\Subset\Omega_T$ and let $\phi\in C^\infty_0(\Omega_T)$. 
Then $\varepsilon\varphi\in C^\infty_0(U)$ with $\varepsilon>0$ for some $U\Subset\Omega_T$
and we have
\[
\varepsilon p\int_Uu\frac{\partial\phi}{\partial t}\,dz+\int_U|Du|^p\,dz
\le\int_U|Du+\varepsilon D\phi|^p\,dz
\]
from which we conclude that
\[
p\int_Uu\frac{\partial\phi}{\partial t}\,dz
+\int_U\frac1\varepsilon(|Du|^p-|Du+\varepsilon D\phi|^p)\,dz
\le0.
\]
As $\varepsilon\to0$, by the dominated convergence theorem we arrive at
\[
\int_Uu\frac{\partial\phi}{\partial t}\,dz
-\int_U|Du|^{p-2}Du\cdot D\phi\,dz
\le0.
\]
The reverse inequality follows by choosing $-\varepsilon\phi$ as the test function
and consequently $u$ is a weak solution of \eqref{ppe}.
If $u\in L^p_{\loc}(0,T;W^{1,p}_{\loc}(\Omega))\cap L^2_{\loc}(\Omega_T)$ satisfies \eqref{qme} for every $U\Subset\Omega_T$, we say that $u$ is a parabolic minimizer.
Hence every weak solution of the $p$-parabolic equation is a parabolic minimizer and, conversely, every
parabolic minimizer is a weak solution of the $p$-parabolic equation. 

Let us then consider more general equations 
\begin{equation}\label{ape}
\frac{\partial u}{\partial t}-\textrm{div}A(x,t,Du)= 0,
\qquad 1<p<\infty,
\end{equation}
of the the $p$-Laplacian type,
where $A: \R^n\times\R\times \R^n \to \R $ satisfies the following assumptions:
\begin{enumerate}
\item $ (x,t)\mapsto A(x,t,\xi)$ is measurable for every $\xi$,
\item $\xi \mapsto A(x,t,\xi)$ is continuous for almost every $(x,t)$, and
\item  there exist $0<c_1\leq c_2<\infty$ such that for every $\xi$ and almost every $(x,t)$, we have
\[
A(x,t,\xi)\cdot\xi\ge c_1|\xi|^p
\quad\text{and}\quad
|A(x,t,\xi)|\le c_2|\xi|^{p-1}.
\]
\end{enumerate}

As above, we can show that a weak solution of \eqref{ape} satisfies
\[
\int_Uu\frac{\partial\phi}{\partial t}\,dz
+c_1\int_U|Du|^p\,dz
\le c_2\int_U|Du|^{p-1}|Du+D\phi|\,dz
\]
for every $U\Subset\Omega_T$ and $\phi\in C^\infty_0(U)$. Young's
inequality implies that
there are constants $\alpha>0$ and $K\ge1$,
depending only on $p$, $c_1$ and $c_2$, such that
\[
\alpha\int_Uu\frac{\partial\phi}{\partial t}\,dz
+\int_U|Du|^p\,dz
\le K\int_U|Du|^p\,dz
\]
for every $U\Subset\Omega_T$ and $\phi\in C^\infty_0(U)$.
A function $u\in L^p_{\loc}(0,T;W^{1,p}_{\loc}(\Omega))\cap L^2_{\loc}(\Omega_T)$, which
satisfies this property is called a parabolic quasiminimizer. 
In contrast with the elliptic case, two parameters $\alpha$ and $K$ are required in the parabolic
case to be able to obtain a notion of a quasiminimizer which includes the whole
class of equations of the type \eqref{ape}. 
Another possibility would be to study qasiminimizers with more general variational integrals than
the $p$-Dirichlet integral, but we leave this for the interested reader.

\begin{example}
When $K>1$, then being a quasiminimizer is not a local
property. Indeed, consider the function $u:(0,\infty)\times(0,1)\to\R$ defined by setting
\[
u(x,t)=\frac{x-(i-1)}k+\sum_{j=1}^{i-1}\frac1j,
\]
when $i-1<x\le i$ with $i=1,2,\dots$ 
This function is an elliptic quasiminimizer with some $K>1$ when tested on intervals
of finite length by a criterion given in \cite{GG84}. 
However, it fails to be a quasiminimizer on the whole positive axis by the same criterion. 
\end{example}

\subsection{Metric case}
The advantage of the notion of a quasiminimizer is that it makes sense
also in metric spaces and this enables us to develop the theory of nonlinear
parabolic partial differential equations also in the metric context.

\begin{definition}
Suppose that $\Omega\subset X$ is an open set and $0<T<\infty$. Let $1<p<\infty$, $\alpha>0$ and  $K\geq 1$. We say that a function $u\in L_{\loc}^p(0,T;N_{\loc}^{1,p}(\Omega))\cap L_{\loc}^2(\Omega_T)$ is a parabolic $K$-quasiminimizer, if for every open $U \Subset \Omega_T$ and for all functions $\phi \in\lip(\Omega_T)$ such that $\{\phi\neq 0\}\subset U$, we have
\begin{align}\label{quasiminimizer definition}
\alpha\int_Uu \frac{\partial\phi}{\partial t}\,d\nu+\int_U g_u^p\,d\nu \leq K\int_U g_{u+\phi}^p\,d\nu.
\end{align}
A function $u\in L_{\loc}^p(0,T;N_{\loc}^{1,p}(\Omega))\cap L_{\loc}^2(\Omega_T)$ is a parabolic $K$-quasisuper-minimizer, if \eqref{quasiminimizer definition} holds for every open $U\Subset \Omega_T$ and for all nonnegative functions $\phi \in \lip(\Omega_T) $ such that $\{\phi>0\}\subset U$. A function $u$ is a parabolic $K$-quasisubminimizer, if  $-u$ is a parabolic $K$-quasisuperminimizer. 
If $K=1$, then the parabolic $K$-quasiminimizer and parabolic $K$-quasi-superminimizer are called parabolic minimizer and parabolic superminimizer, respectively.
\end{definition}

We proceed by proving characterizations for parabolic quasiminimizers. The characterizations are proved for parabolic quasisuperminimizers. However, the reader should note that Lemmas \ref{characterization 1} through \ref{quasiminimizer after partial integration} can be formulated and proved also for $K$-quasisubminimizers, after replacing the word \lq nonnegative\rq\ in the proofs with the word \lq nonpositive\rq.
In particular, this implies that the corresponding results also hold for parabolic minimizers.

\begin{lemma}\label{characterization 1}
A function $u\in L_{\loc}^p(0,T;N_{\loc}^{1,p}(\Omega))\cap L_{\loc}^2(\Omega_T)$ is a parabolic $K$-quasisuperminimizer if and only if for  every  $\nu$-measurable set $E \Subset \Omega_T$ and every  nonnegative $\phi \in\lip(\Omega_T)$ such that $\{\phi>0\}\subset E$,  we have
\begin{align*}
\alpha \int_E u \frac{\partial\phi}{\partial t}\,d\nu+\int_E g_u^p\,d\nu \leq K \int_E g_{u+\phi}^p\,d\nu.
\end{align*}

\begin{proof}
Assume that $u$ is a parabolic $K$-quasisuperminimizer in $\Omega_T$ and let ${E}\Subset \Omega$ be a $\nu$-measurable set.  Let $\phi \in \lip(\Omega)$ be nonnegative with the property $\{\phi>0\}\subset E$. Since $E$ is $\nu$-measurable, and since $u+\phi \in L_{\loc}^p(0,T;N_{\loc}^{1,p}(\Omega))\cap L_{\loc}^2(\Omega_T)$, by the regularity of $\mu$ there exists an open set $U\Subset \Omega_T$ such that 
\begin{align*}
\int_{U\setminus E} g_{u+\phi}^p\, d\nu < \frac{\varepsilon}{K}.
\end{align*}
Moreover, $\{\phi\neq 0\} \subset E$, and since ${\phi}$ is continuous with respect to time, we have 
\[
\nu((F\setminus E)\cap\{\partial \phi/\partial t\neq 0\})=0. 
\]
Since $u$ is a $K$-quasisuperminimizer, we arrive at 
\begin{align*}
\alpha\int_{E}& u \frac{\partial\phi}{\partial t}\,d\nu+\int_{{E}} g_u^p\,d\nu
\leq \alpha\int_{U} u \frac{\partial \phi}{\partial t}\, d\nu+\int_{U} g_u^p\,d\nu\\
&\leq K \int_{E} g_{u+\phi}^p\,d\nu+ K \int_{U\setminus E} g_{u+ \phi}^p\,d\nu
\leq K \int_{E} g_{u+\phi}^p\,d\nu+ \varepsilon.
\end{align*} 
This holds for every $\varepsilon$, and so one direction of the claim is true by passing $\varepsilon\to0$. 
The other direction is immediate, since open sets are $\nu$-measurable.
\end{proof}
\end{lemma}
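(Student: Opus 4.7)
The reverse implication is immediate: open sets are $\nu$-measurable, so the characterization already specializes to the original definition. The nontrivial direction is to pass from the class of open test domains $U$ in the definition to an arbitrary measurable $E \Subset \Omega_T$. The plan is to sandwich $E$ between itself and a slightly larger open set $U$, apply the quasiminimizer inequality on $U$, and then absorb the error $U \setminus E$ into an arbitrarily small constant.

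More precisely, given $\varepsilon > 0$, a measurable $E \Subset \Omega_T$ and a nonnegative $\phi \in \lip(\Omega_T)$ with $\{\phi > 0\} \subset E$, I would first use the outer regularity of $\nu$ (which holds because $(X,d)$ is complete with a doubling Borel measure and hence $\nu = \mu \otimes dt$ is a doubling Borel measure on $X \times \R$) to find an open $U$ with $E \subset U \Subset \Omega_T$ and $\nu(U \setminus E)$ as small as we like. Since $g_{u+\phi}^p \in L^1_{\loc}(\Omega_T)$, absolute continuity of the integral lets me arrange
\[
\int_{U \setminus E} g_{u+\phi}^p \, d\nu < \frac{\varepsilon}{K}.
\]

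Next, I would apply the quasiminimizer inequality on $U$. Since $\{\phi > 0\} \subset E$, the function $\phi$ vanishes on $U \setminus E$, and by Rademacher's theorem the time derivative $\partial \phi / \partial t$ vanishes $\nu$-almost everywhere on the interior of $\{\phi = 0\}$; in fact $\nu((U \setminus E) \cap \{\partial_t \phi \neq 0\}) = 0$ since $\phi$ is identically zero on $U \setminus E$. Therefore $\int_U u \, \partial_t \phi \, d\nu = \int_E u \, \partial_t \phi \, d\nu$. Using also $\int_E g_u^p \, d\nu \leq \int_U g_u^p \, d\nu$ (as $E \subset U$), the quasiminimizer inequality on $U$ yields
\[
\alpha \int_E u \, \partial_t \phi \, d\nu + \int_E g_u^p \, d\nu \leq K \int_E g_{u+\phi}^p \, d\nu + K \int_{U \setminus E} g_{u+\phi}^p \, d\nu < K \int_E g_{u+\phi}^p \, d\nu + \varepsilon.
\]
Letting $\varepsilon \to 0$ gives the desired inequality.

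The only real subtlety is the justification that the left-hand time integral on $U$ reduces to one on $E$, which requires observing that $\phi \equiv 0$ on $U \setminus E$ and hence $\partial_t \phi = 0$ a.e.\ there; once this and the outer regularity/absolute-continuity choice of $U$ are set up, the argument is a direct application of the definition.
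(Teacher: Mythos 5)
Your proof is correct and follows essentially the same route as the paper's: choose by regularity an open set $U \Supset E$ with $\int_{U\setminus E} g_{u+\phi}^p\,d\nu < \varepsilon/K$, observe that $\partial\phi/\partial t = 0$ $\nu$-a.e.\ on $U\setminus E$ so the time term is unaffected, apply the definition on $U$, and let $\varepsilon\to 0$. The reverse direction is handled identically in the paper (open sets are $\nu$-measurable), so no further comment is needed.
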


%

The following two characterizations are often useful in applications and in many cases can be taken
as the definition of a parabolic $K$-quasiminimizer.

\begin{lemma}\label{characterization 2}
A function $u\in L_{\loc}^p(0,T;N_{\loc}^{1,p}(\Omega))\cap L_{\loc}^2(\Omega_T)$ is a $K$-quasi-superminimizer if and only if  for every nonnegative  $\phi \in \lip(\Omega_T)$   such that $\{\phi \neq 0\}\Subset \Omega_T$, we have
\begin{align}\label{quasiminimizer on positive part}
\alpha\int_{\{\phi\neq 0\}} u \frac{\partial\phi}{\partial t}\,d\nu+\int_{\{\phi\neq 0\}} g_u^p\,d\nu \leq K \int_{\{\phi\neq 0\}} g_{u+\phi}^p\,d\nu.
\end{align}

\begin{proof}
Suppose first that \eqref{quasiminimizer on positive part} holds. Let $\phi \in \lip(\Omega_T)$ be nonnegative with $\{\phi\neq 0\}\Subset \Omega_T$. Let $U \Subset \Omega_T$ be an open set such that $\{\phi\neq 0\}\subset U$.  Since $\phi$ is continuous with respect to time, we have 
\[
\nu(\{\phi =0, \, \partial \phi/\partial t \neq 0\})=0,
\] 
and consequently
\begin{align*}
\alpha\int_{U}& u\frac{ \partial \phi}{\partial t} \,d\nu+\int_{U} g_u^p \, d\nu
= \alpha\int_{\{\phi \neq 0\}} u\frac{ \partial \phi}{\partial t} \,d\nu+\int_{ \{\phi \neq 0\}} g_u^p \, d\nu +\int_{U\cap \{\phi =0\}} g_u^p \, d\nu\\
 &=K\int_{ \{\phi \neq 0\}} g_{u+\phi}^p \, d\nu+ \int_{U\cap \{\phi =0\}} g_u^p \, d\nu
 \leq K\int_{U} g_{u+\phi}^p \, d\nu.
\end{align*}
This shows that $u$ is a parabolic $K$-quasisuperminimizer.

Suppose then that $u$ is a $K$-quasisuperminimizer. Consider an open set $U\Subset \Omega_T$ and a nonnegative function $\phi \in \lip(\Omega_T)$ such that $\{\phi\neq 0\} \subset U$. The set $\{\phi\neq 0\}$
is $\nu$-measurable and $\{\phi\neq 0\}\Subset \Omega_T$,  and so by Lemma \ref{characterization 1} we have \eqref{quasiminimizer on positive part}. This completes the proof.
\end{proof}

\end{lemma}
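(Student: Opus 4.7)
My plan is to prove both directions directly from Lemma \ref{characterization 1}, using only the elementary fact that $\phi \in \lip(\Omega_T)$ is continuous in time, so that $\partial\phi/\partial t=0$ $\nu$-almost everywhere on $\{\phi=0\}$, together with the property from the preliminaries that minimal upper gradients coincide where the functions coincide.

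For the easier (``only if'') direction, I would observe that if $\phi\in\lip(\Omega_T)$ is nonnegative with $\{\phi\neq 0\}\Subset\Omega_T$, then $E:=\{\phi\neq 0\}$ is open (hence $\nu$-measurable) because $\phi$ is continuous, and $\{\phi>0\}=E$ since $\phi\geq 0$. One then simply applies Lemma \ref{characterization 1} with this choice of $E$.

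For the nontrivial (``if'') direction, the strategy is to split the domain of integration. Given an open $U\Subset\Omega_T$ and a nonnegative $\phi\in\lip(\Omega_T)$ with $\{\phi\neq 0\}\subset U$, I write $U = \{\phi\neq 0\}\cup (U\cap\{\phi=0\})$. On the complement piece $U\cap\{\phi=0\}$, the function $\phi$ attains its minimum (as $\phi\geq 0$) so $\partial\phi/\partial t$ vanishes $\nu$-almost everywhere; moreover $u+\phi=u$ on $\{\phi=0\}$, so by the property of minimal upper gradients recalled in Section~2, $g_{u+\phi}=g_u$ $\nu$-a.e.\ on this set. Using these two facts, the hypothesis \eqref{quasiminimizer on positive part} applied to $\phi$ yields
\[
\alpha\int_U u\,\frac{\partial\phi}{\partial t}\,d\nu+\int_U g_u^p\,d\nu
=\alpha\int_{\{\phi\neq 0\}}u\,\frac{\partial\phi}{\partial t}\,d\nu+\int_{\{\phi\neq 0\}}g_u^p\,d\nu+\int_{U\cap\{\phi=0\}}g_u^p\,d\nu,
\]
and then, after applying \eqref{quasiminimizer on positive part} and using $g_u=g_{u+\phi}$ on $\{\phi=0\}$ together with $K\geq 1$, we get the bound $K\int_U g_{u+\phi}^p\,d\nu$, which is the quasisuperminimizer inequality.

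The main obstacle — and really the only nontrivial point — is the careful justification that $\partial\phi/\partial t = 0$ $\nu$-a.e.\ on $\{\phi=0\}$ and that $g_u=g_{u+\phi}$ $\nu$-a.e.\ there. The first comes from Rademacher-type reasoning for the Lipschitz function $\phi$ in the time variable (at a minimum of a differentiable function the derivative vanishes), and the second is exactly the statement from the preliminaries that $g_v=g_u$ $\mu$-a.e.\ on any Borel set where $u=v$, applied slicewise in $t$. Once these are in hand, the calculation is just a splitting argument and the converse direction is a direct application of Lemma \ref{characterization 1}.
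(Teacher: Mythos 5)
Your proposal is correct and follows essentially the same route as the paper: the direction from the quasisuperminimizer property to \eqref{quasiminimizer on positive part} is obtained by applying Lemma \ref{characterization 1} to the ($\nu$-measurable, compactly contained) set $\{\phi\neq 0\}$, and the converse is the same splitting of $U$ into $\{\phi\neq 0\}$ and $U\cap\{\phi=0\}$, using that $\partial\phi/\partial t=0$ $\nu$-a.e.\ where $\phi=0$ and that $g_{u+\phi}=g_u$ there together with $K\geq 1$. Your explicit justification of these two facts (vanishing of the time derivative a.e.\ on the zero set of a Lipschitz function, and the locality of minimal upper gradients applied slicewise) is only slightly more detailed than the paper's, which asserts them directly.
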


\begin{lemma} A function $u\in L_{\loc}^p(0,T;N_{\loc}^{1,p}(\Omega))\cap L_{\loc}^2(\Omega_T)$ is a $K$-quasi-superminimizer if and only if  for every nonnegative  $\phi \in \lip(\Omega_T)$   such that $\supp\phi\subset \Omega_T$, we have
\begin{align}\label{quasiminimizer on positive part 2}
\alpha\int_{\supp\phi} u \frac{\partial\phi}{\partial t}\,d\nu+\int_{\supp\phi} g_u^p\,d\nu \leq K \int_{\supp\phi} g_{u+\phi}^p\,d\nu.
\end{align}

\begin{proof}
Suppose $u$ is a $K$-quasisuperminimizer and let $\phi \in \lip(\Omega_T)$ be nonnegative such that $\supp\phi \subset \Omega_T$. Because $\phi$ is continuous, we have 
\[
\nu(\{\phi =0, \partial \phi/\partial t \neq 0\})=0.
\] 
Since $K\geq 1$, we can write 
\begin{align*}
\int_{\supp\phi}&u\frac{\partial \phi}{\partial t} \, d\nu +  \int_{\supp\phi} g_{u}^p \, d\nu\\
&\le \int_{\{\phi \neq 0\}}u\frac{\partial \phi}{\partial t} \, d\nu +  \int_{\{\phi \neq 0 \}} g_{u}^p \, d\nu
+ \int_{\supp\phi\setminus \{\phi \neq 0 \}} g_{u}^p \, d\nu\\
& \leq K\int_{\{\phi \neq 0 \}} g_{u+\phi}^p \, d\nu+ \int_{\supp\phi\setminus \{\phi \neq 0 \}} g_{u+\phi}^p \, d\nu\\
&\leq K\int_{\supp\phi} g_{u+\phi}^p \, d\nu.
\end{align*}

Let then $\phi\in \lip(\Omega_T)$ be such that $\{\phi\neq 0\}\Subset \Omega_T$. For $i=1,2,\dots$, define
\begin{align*}
\psi_i=(\phi-i^{-1})_+-(\phi+i^{-1})_-.
\end{align*}
Then for each $i$ we have 
\[
\psi_i\in \lip(\Omega_T), 
\quad
\{\psi_i\neq 0\}\subset \{\phi\neq 0\}, 
\quad
\nu(\{\phi\neq 0\}\setminus \{\psi_i\neq 0\})\rightarrow 0
\]
as $i \rightarrow \infty$, and
\begin{align*}
\frac{\partial \psi_i}{\partial t}=\frac{\partial \phi}{\partial t}
\quad\text{and}\quad g_{\psi_i}=g_{\phi},  
\end{align*}
in the set $\{\psi_i\neq 0\}$.
Moreover, since $\phi$ is continuous, we have supp$\,\psi_i\subset \{|\phi|\geq i^{-1}\}$. Let $\varepsilon>0$. By the absolute continuity of the integral and the above properties, there exists a large enough $i$ such that
\begin{align*}
\int_{\{\phi \neq 0\}}&u\frac{\partial \phi}{\partial t} \, d\nu + C_1 \int_{\{\phi \neq 0 \}} g_{u}^p \, d\nu
\leq\int_{\textrm{supp}\, \psi_i}u\frac{\partial \phi}{\partial t} \, d\nu + C_1 \int_{\supp\psi_i} g_{u}^p \, d\nu+\varepsilon\\&\le K  \int_{\supp\psi_i} g_{u-\psi_i}^p \, d\nu+\varepsilon
\leq K  \int_{\{\phi \neq 0 \}} g_{u-\phi}^p \, d\nu+2\varepsilon.
\end{align*}
Since this is true for any positive $\varepsilon$, by Lemma \ref{characterization 2} $u$ is a parabolic $K$-quasisuperminimizer. 
\end{proof}
\end{lemma}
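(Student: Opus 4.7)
The plan is to prove both directions by reduction to Lemma~\ref{characterization 2}, which characterizes $K$-quasisuperminimizers by the same inequality but with integration over $\{\phi\neq 0\}$ rather than $\supp\phi$. Since $\supp\phi=\overline{\{\phi\neq 0\}}$, the two conditions differ only on the boundary piece $E_\phi:=\supp\phi\setminus\{\phi\neq 0\}\subset\{\phi=0\}$. On this Borel set $u+\phi=u$, so $g_{u+\phi}=g_u$ $\nu$-a.e.\ there; and because $\phi$ is Lipschitz in $t$, $\partial\phi/\partial t$ vanishes $\nu$-a.e.\ on $\{\phi=0\}$, exactly the observation already used in the preceding lemmas.

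For the forward direction I would take $u$ a $K$-quasisuperminimizer and a nonnegative $\phi$ with $\supp\phi\subset\Omega_T$. Then $\{\phi\neq 0\}$ is open and compactly contained in $\Omega_T$, so Lemma~\ref{characterization 2} gives the desired inequality over $\{\phi\neq 0\}$. I would split $\supp\phi=\{\phi\neq 0\}\cup E_\phi$, observe that the time-derivative integral over $E_\phi$ vanishes, and use $\int_{E_\phi} g_u^p\,d\nu=\int_{E_\phi} g_{u+\phi}^p\,d\nu$ together with $K\ge 1$ to absorb the residual gradient term into the right-hand side.

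For the reverse direction I would aim to verify the hypothesis of Lemma~\ref{characterization 2}. Given a nonnegative $\phi\in\lip(\Omega_T)$ with $\{\phi\neq 0\}\Subset\Omega_T$, I would approximate by $\psi_i:=(\phi-1/i)_+$, which is nonnegative, Lipschitz and satisfies $\supp\psi_i\subset\{\phi\ge 1/i\}\subset\{\phi\neq 0\}\Subset\Omega_T$. On $\{\psi_i\neq 0\}=\{\phi>1/i\}$ the function $\psi_i$ differs from $\phi$ by the constant $-1/i$, so $\partial\psi_i/\partial t=\partial\phi/\partial t$ and $g_{u+\psi_i}=g_{u+\phi}$ there. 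Inserting $\psi_i$ into the support-inequality, splitting $\supp\psi_i$ into $\{\psi_i\neq 0\}$ and $E_{\psi_i}:=\supp\psi_i\setminus\{\psi_i\neq 0\}\subset\{\phi=1/i\}$, and applying the observations above leads to
\[
\alpha\int_{\{\psi_i\neq 0\}} u\,\frac{\partial\phi}{\partial t}\,d\nu+\int_{\{\psi_i\neq 0\}} g_u^p\,d\nu \le K\int_{\{\psi_i\neq 0\}} g_{u+\phi}^p\,d\nu + (K-1)\int_{E_{\psi_i}} g_u^p\,d\nu.
\]

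The main obstacle will be the passage $i\to\infty$. The leading integrals converge to their analogues over $\{\phi\neq 0\}=\bigcup_i\{\phi>1/i\}$ by monotone or dominated convergence, since $u\,\partial\phi/\partial t$, $g_u^p$ and $g_{u+\phi}^p$ are all integrable on the compactly contained set $\{\phi\neq 0\}$. The residual term over $E_{\psi_i}\subset\{0<\phi\le 1/i\}$ is handled by absolute continuity of the integral: these sets decrease to $\emptyset$ inside the bounded region $\Omega_T$, so $\nu(\{0<\phi\le 1/i\})\to 0$ and thus $\int_{E_{\psi_i}} g_u^p\,d\nu\to 0$. In the limit this yields the $\{\phi\neq 0\}$-inequality of Lemma~\ref{characterization 2}, completing the proof.
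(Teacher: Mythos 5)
Your proposal is correct and follows essentially the same route as the paper: the forward direction splits $\supp\phi$ into $\{\phi\neq 0\}$ and the residual set where $\partial\phi/\partial t=0$ and $g_{u+\phi}=g_u$ $\nu$-a.e., absorbing the extra term via $K\ge 1$, and the reverse direction uses the truncations $(\phi-1/i)_+$ (which agree with the paper's $\psi_i$ for nonnegative $\phi$) together with absolute continuity of the integral before invoking Lemma~\ref{characterization 2}. Your write-up of the limit passage $i\to\infty$ and of the residual term over $\{0<\phi\le 1/i\}$ is, if anything, more explicit than the paper's.
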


It turns out that after mollifying  a $K$-quasisuperminimizer in time, we obtain estimates for test functions which do not necessarily have to be smooth in time.
In what follows $(\cdot)_\varepsilon$ denotes the standard time mollification
\begin{align*}
f_\varepsilon(x,t)=\int_{-\varepsilon}^{\varepsilon} \eta_\varepsilon(s) f(x,t-s)\,ds.
\end{align*}

\begin{lemma}\label{quasiminimizer after partial integration} Let  $u\in L_{\loc}^p(0,T;N_{\loc}^{1,p}(\Omega))\cap L_{\loc}^2(\Omega_T)$ be a parabolic $K$-quasi-superminimizer. Then for every nonnegative $\phi \in L^p(0,T;N^{1,p}(\Omega))\cap L^2(\Omega_T)$ such that $\{\phi\neq 0\}\Subset \Omega_T$, we have 
\begin{align}\label{inequality after partial integration}
-\alpha \int_{\{\phi \neq 0\}}\frac{\partial u_\varepsilon}{\partial t} \phi \,d\nu+ \int_{\{\phi \neq 0\}}(g_u^p)_\varepsilon\,d\nu \leq K \int_{\{\phi \neq 0\}}(g_{u(x,t-s)+\phi}^p)_\varepsilon\,d\nu
\end{align}
for every small enough positive $\varepsilon$.
Moreover, if $u\in L_{\loc}^p(0,T;N^{1,p}(\Omega))\cap L_{\loc}^2(\Omega_T)$, then the same inequality also holds for every nonnegative $\phi \in L_\textrm{c}^p(0,T; N_0^{1,p}(\Omega))\cap L^2(\Omega_T)$.

\begin{proof}
See Lemma 2.3, Corollary 2.4 and Lemma 2.7 in \cite{MMPP13}.
\end{proof}
\end{lemma}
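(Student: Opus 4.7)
The plan is to first prove the inequality for nonnegative test functions $\phi \in \lip(\Omega_T)$ with $\{\phi \neq 0\} \Subset \Omega_T$, by combining a time translation of the test function with mollification in the time variable, and then to extend it to the larger class $\phi \in L^p(0, T; N^{1,p}(\Omega)) \cap L^2(\Omega_T)$ by a density argument, using that Lipschitz functions are dense in the parabolic Newtonian space under our doubling and Poincar\'e hypotheses.

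For the Lipschitz case, fix $\varepsilon > 0$ small enough that the translated set $\{(x, t) : \phi(x, t + s) \neq 0\}$ remains compactly contained in $\Omega_T$ for every $|s| < \varepsilon$. For each such $s$, the function $\phi_s(x, t) := \phi(x, t + s)$ is an admissible nonnegative Lipschitz test function, so Lemma \ref{characterization 1} with the $\nu$-measurable set $E_s = \{\phi_s \neq 0\}$ gives
\begin{align*}
\alpha \int_{E_s} u(x, t) \frac{\partial \phi}{\partial t}(x, t + s) \, d\nu + \int_{E_s} g_u^p(x, t) \, d\nu \leq K \int_{E_s} g_{u(x, t) + \phi(x, t + s)}^p \, d\nu.
\end{align*}
The change of variable $t \mapsto t - s$ (using translation-invariance of $dt$) shifts the $s$-dependence from the domain onto $u$, producing an inequality over the fixed set $\{\phi \neq 0\}$ in which $u$ is now evaluated at $(x, t - s)$. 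Multiplying by $\eta_\varepsilon(s) \geq 0$, integrating over $s \in (-\varepsilon, \varepsilon)$ and applying Fubini produces exactly the three mollified quantities $u_\varepsilon$, $(g_u^p)_\varepsilon$, and $(g_{u(\cdot, t-s) + \phi}^p)_\varepsilon$, giving
\begin{align*}
\alpha \int_{\{\phi \neq 0\}} u_\varepsilon \frac{\partial \phi}{\partial t} \, d\nu + \int_{\{\phi \neq 0\}} (g_u^p)_\varepsilon \, d\nu \leq K \int_{\{\phi \neq 0\}} (g_{u(\cdot, t-s) + \phi}^p)_\varepsilon \, d\nu.
\end{align*}
Since $\phi$ is Lipschitz, vanishes outside a compact subset of $\Omega_T$, and $u_\varepsilon$ is smooth in $t$ on the relevant region for $\varepsilon$ small, integration by parts in $t$ converts the first integral into $-\int \partial_t u_\varepsilon \cdot \phi \, d\nu$ with no boundary term, establishing the Lipschitz case.

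For the extension, approximate a nonnegative $\phi \in L^p(0, T; N^{1,p}(\Omega)) \cap L^2(\Omega_T)$ with $\{\phi \neq 0\} \Subset \Omega_T$ by nonnegative $\phi_j \in \lip(\Omega_T)$ whose supports stay in a fixed compact subset of $\Omega_T$ and which converge to $\phi$ both in $L^p(0, T; N^{1,p}(\Omega))$ and in $L^2(\Omega_T)$. For fixed small $\varepsilon > 0$ the term $\partial_t u_\varepsilon$ lies in $L^2_{\loc}(\Omega_T)$, so the first integral in the Lipschitz inequality for $\phi_j$ converges by $L^2$-duality; the middle integral is independent of $j$; and the right-hand side converges by subadditivity $g_{u + \phi_j} \leq g_{u + \phi} + g_{\phi_j - \phi}$ together with Lemma \ref{convergence of upper gradient}. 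To match the $j$-dependent domains $\{\phi_j \neq 0\}$ with $\{\phi \neq 0\}$ one uses that $g_{u + v} = g_u$ $\mu$-almost everywhere wherever $v = 0$, so both sides can first be rewritten over a fixed open set containing all $\{\phi_j \neq 0\}$ and then restricted back to $\{\phi \neq 0\}$ after passage to the limit. The second part of the lemma, for $\phi \in L^p_{\textrm{c}}(0, T; N_0^{1, p}(\Omega)) \cap L^2(\Omega_T)$ under the stronger global assumption $u \in L^p_{\loc}(0, T; N^{1, p}(\Omega))$, follows identically but with Lipschitz approximations having zero spatial boundary values and compact support in time.

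The main difficulty is the bookkeeping across the time translation, the mollification, and the Lipschitz approximation: one must verify that the translated sets $\{\phi_s \neq 0\}$ remain compactly contained in $\Omega_T$ uniformly for $|s| < \varepsilon$, that the integration-by-parts step produces no boundary contribution (which holds because $\phi$ is supported away from $\partial \Omega_T$ while $u_\varepsilon$ is smooth in $t$), and that transferring from $\{\phi_j \neq 0\}$ to $\{\phi \neq 0\}$ in the limit introduces no stray contribution from $\{\phi = 0\}$ — a point that is automatic when $K = 1$ and requires the identity $g_{u + v} = g_u$ on $\{v = 0\}$ for general $K \geq 1$.
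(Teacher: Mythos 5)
The paper does not prove this lemma in-house; it simply cites Lemma 2.3, Corollary 2.4 and Lemma 2.7 of \cite{MMPP13}. Your argument reconstructs what those results do: test the quasisuperminimizing inequality with the time-translate $\phi(x,t+s)$, change variables in $t$ to move the translation onto $u$, average against $\eta_\varepsilon(s)$ and apply Fubini to produce the three mollified quantities, integrate by parts in time (legitimate because $\{\phi\neq0\}\Subset\Omega_T$ kills the boundary terms and $u_\varepsilon$ is absolutely continuous in $t$ with $\partial_t u_\varepsilon=\int\eta_\varepsilon'(s)u(x,t-s)\,ds\in L^2_{\loc}$), and then extend by density of Lipschitz functions, which is available here precisely because of the standing doubling and Poincar\'e assumptions. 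This is the expected route and the structure is sound.

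One step deserves more than the sentence you give it: the transfer of the integration domain from $\{\phi_j\neq0\}$ to $\{\phi\neq0\}$ in the density step. The locality $g_{u+v}=g_u$ on $\{v=0\}$ lets you enlarge both sides to a fixed open $U$ containing all the sets $\{\phi_j\neq0\}$, and passing to the limit over $U$ is fine; but when you shrink back to $\{\phi\neq0\}$ the right-hand side over $U\setminus\{\phi\neq0\}$ contributes $K\int_{U\setminus\{\phi\neq0\}}(g_u^p)_\varepsilon\,d\nu$ while the left-hand side only absorbs $\int_{U\setminus\{\phi\neq0\}}(g_u^p)_\varepsilon\,d\nu$, leaving a residual $(K-1)\int_{U\setminus\{\phi\neq0\}}(g_u^p)_\varepsilon\,d\nu$ whenever $K>1$. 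This is not removed by the locality identity alone; you must additionally arrange, as in the proof of Lemma \ref{characterization 1}, that $U$ is chosen (by outer regularity and absolute continuity of the integral) so that $\int_{U\setminus\{\phi\neq0\}}g_u^p\,d\nu<\delta$, and you must then check that the Lipschitz approximations $\phi_j$ can be taken with $\{\phi_j\neq0\}$ inside that particular $U$ (approximating $\phi$ in $N^{1,p}_0$ of a slightly larger open set squeezed between $\{\phi\neq0\}$ and $U$). With that adjustment the argument closes; without it the conclusion over $\{\phi\neq0\}$ only follows up to an uncontrolled nonnegative error for $K>1$.
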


\section{Parabolic comparison principle}

In this section we prove a comparison principle for minimizers, and as a consequence obtain the uniqueness of parabolic minimizers. We emphasize that it is essential to have $K=1$ in the discussion below. Indeed, we give
an example which shows that the
comparison principle and the uniqueness result do not hold for parabolic $K$-quasiminimizers when $K>1$. 

\begin{theorem}Let $u\in L^p(0,T;N^{1,p}(\Omega))\cap L^2(\Omega_T)$ be a parabolic superminimizer and let $v\in L^p(0,T;N^{1,p}(\Omega))\cap L^2(\Omega_T)$ be a parabolic subminimizer, both with the same constant $\alpha>0$ in \eqref{quasiminimizer definition}. Suppose $u\geq v$ near the parabolic boundary of $\Omega_T$, in the sense that for almost every $0<t<T$ we have the lateral boundary condition $(v(x,t)-u(x,t))_+\in N_0^{1,p}(\Omega)$ and also the initial condition
\begin{align}\label{initial condition}
\frac{1}{h}\int_0^h \int_{\Omega} (v-u)_+^2\,d\nu \rightarrow 0, \qquad \textrm{ as }h\rightarrow 0.
\end{align}
Then $u\geq v$ $\nu$-almost everywhere in $\Omega_T$.
\end{theorem}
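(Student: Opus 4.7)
The plan is to exploit the superminimizer inequality for $u$ together with the superminimizer inequality for $-v$ (valid because $v$ is a subminimizer), testing both with a common nonnegative function built from $w:=(v-u)_+$, adding the two resulting inequalities, and using $K=1$ to obtain an exact spatial cancellation. What survives is then a parabolic energy estimate on $w^2$ which, combined with the initial condition \eqref{initial condition}, forces $w\equiv 0$.

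Fix $t_0\in(0,T)$ and $h>0$, and let $\zeta_h$ be the piecewise linear cutoff with $\zeta_h=1$ on $[h,t_0]$ and $\zeta_h=0$ outside $(0,t_0+h)$. The lateral boundary hypothesis $(v-u)_+(\cdot,t)\in N_0^{1,p}(\Omega)$ for a.e.\ $t$ ensures that $\phi:=\zeta_h w\in L_{\textrm{c}}^p(0,T;N_0^{1,p}(\Omega))\cap L^2(\Omega_T)$ is admissible in the \lq\lq moreover\rq\rq\ part of Lemma~\ref{quasiminimizer after partial integration}. Applying that lemma to $u$ and to $-v$ with this $\phi$ and adding gives
\[
\alpha\int\partial_t(v_\eps-u_\eps)\,\phi\,d\nu+\int\bigl[(g_u^p)_\eps+(g_v^p)_\eps\bigr]\,d\nu\leq\int\bigl[(g_{u(\cdot,t-s)+\phi}^p)_\eps+(g_{-v(\cdot,t-s)+\phi}^p)_\eps\bigr]\,d\nu.
\]
On $\{v>u\}$ one has $u+\phi=(1-\zeta_h)u+\zeta_h v$ and $-v+\phi=-\bigl((1-\zeta_h)v+\zeta_h u\bigr)$, so sub-additivity of upper gradients and convexity of $s\mapsto s^p$ yield
\[
g_{u+\phi}^p+g_{-v+\phi}^p\leq\bigl((1-\zeta_h)g_u+\zeta_h g_v\bigr)^p+\bigl(\zeta_h g_u+(1-\zeta_h)g_v\bigr)^p\leq g_u^p+g_v^p,
\]
with equality on $\{v\leq u\}$ where $\phi=0$. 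Letting $\eps\to 0$ via Lemma~\ref{convergence of upper gradient} to dispose of the outer mollification and the inner time shift $s\to 0$, the spatial terms on both sides agree in the limit and
\[
\alpha\limsup_{\eps\to 0}\int\partial_t(v_\eps-u_\eps)\,\phi\,d\nu\leq 0.
\]

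For the parabolic term, set $w_\eps:=(v_\eps-u_\eps)_+$. Since $u_\eps,v_\eps$ are classically differentiable in $t$, the chain rule gives $\partial_t(v_\eps-u_\eps)\cdot w_\eps=\tfrac12\partial_t(w_\eps^2)$. A smoothing argument in the spirit of \cite{W87} and \cite{MMPP13}---substituting $w_\eps$ for $w$ inside $\phi$ with an $o(1)$ error as $\eps\to 0$---together with integration by parts in $t$ against the compactly supported $\zeta_h$ produces
\[
\int\partial_t(v_\eps-u_\eps)\,\phi\,d\nu=-\tfrac12\int w_\eps^2\,\zeta_h'\,d\nu+o(1).
\]
With $\zeta_h'=h^{-1}\chi_{(0,h)}-h^{-1}\chi_{(t_0,t_0+h)}$ and sending $\eps\to 0$, one arrives at
\[
\frac1h\int_{t_0}^{t_0+h}\!\!\int_\Omega w^2\,d\mu\,dt\leq\frac1h\int_0^h\!\!\int_\Omega w^2\,d\mu\,dt.
\]
As $h\to 0$ the right-hand side vanishes by \eqref{initial condition}, while for a.e.\ $t_0$ the left-hand side tends to $\int_\Omega w^2(\cdot,t_0)\,d\mu$ by Lebesgue differentiation. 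Hence $w\equiv 0$ $\nu$-a.e.\ and $u\geq v$ $\nu$-a.e.\ in $\Omega_T$.

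The main obstacle is the parabolic step: $\partial_t u$ and $\partial_t v$ exist only as distributions, so the time mollification built into Lemma~\ref{quasiminimizer after partial integration} is indispensable, and the replacement of $w$ by $w_\eps$ must occur only after the spatial cancellation and with careful error control. It is also exactly here that the hypothesis $K=1$ is crucial: any $K>1$ would leave a positive spatial remainder of order $(K-1)(g_u^p+g_v^p)$ that no initial and boundary data could absorb, consistent with the counterexample for genuine $K$-quasiminimizers announced in the introduction.
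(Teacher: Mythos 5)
Your overall strategy coincides with the paper's: test the superminimizer inequality for $u$ and the subminimizer inequality for $v$ with (plus/minus) a common nonnegative function of the form cutoff-in-time times positive part of the difference, add, use $K=1$ to cancel the gradient terms exactly, and let the surviving parabolic term together with the initial condition kill $(v-u)_+$. Your pointwise convexity argument for the spatial cancellation, $\bigl((1-\zeta_h)g_u+\zeta_h g_v\bigr)^p+\bigl(\zeta_h g_u+(1-\zeta_h)g_v\bigr)^p\le g_u^p+g_v^p$ on $\{v>u\}$, is in fact cleaner than the Minkowski add-and-subtract computation the paper uses, and it isolates nicely where $K=1$ enters.

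However, there is a genuine gap in your treatment of the parabolic term, and it sits exactly at the point the introduction flags as requiring ``a delicate smoothing argument.'' You choose $\phi=\zeta_h(v-u)_+$ with the \emph{unmollified} difference, so after adding the two inequalities the time term is $\alpha\int\partial_t(v_\eps-u_\eps)\,\zeta_h\,(v-u)_+\,d\nu$, and you assert that replacing $(v-u)_+$ by $(v_\eps-u_\eps)_+$ costs only $o(1)$. This is not justified and is not obviously true: $\partial_t(v_\eps-u_\eps)$ is only the derivative of a mollification of an $L^2$ function, so it carries no $\eps$-uniform bound (it scales like $\eps^{-1}$), while $(v-u)_+-(v_\eps-u_\eps)_+\to 0$ only in $L^2$; the product of a quantity blowing up with one tending to zero need not vanish, and integration by parts is unavailable because $(v-u)_+$ has no time derivative. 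The paper sidesteps this entirely by putting the mollification \emph{inside} the test function from the outset, $\phi=(v_\eps-u_\eps)_+\chi_h$, so that $\partial_t(v_\eps-u_\eps)\cdot(v_\eps-u_\eps)_+=\tfrac12\partial_t\bigl((v_\eps-u_\eps)_+^2\bigr)$ is an exact identity and the integration by parts against $\chi_h$ is legitimate; the price is that the spatial identity $u+\phi=(1-\chi_h)u+\chi_h v$ is then only approximate, but that error is controlled by Lemma \ref{convergence of upper gradient} via $g_{v_\eps-v}$ and $g_{u_\eps-u}$. The two choices of test function trade exactness between the time term and the space term, and only the space-side error is controllable with the tools at hand; your version places the uncontrolled error on the time side. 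To repair your proof, switch to $\phi=(v_\eps-u_\eps)_+\zeta_h$ and rerun your convexity argument with the additional $g_{v_\eps-v}$, $g_{u_\eps-u}$ and time-shift terms absorbed by Lemma \ref{convergence of upper gradient}, as in the paper.
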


\begin{proof}
Assume that $u$ and $v$ are as in the formulation of the result. Let $t'\in (0,T)$, and for $h>0$ define
\begin{align*}
\chi_h=\begin{cases}
 \frac{t-h}{h}, &h\leq t \leq 2h, \\
 1,& 2h \leq t \leq t'-2h,\\
 \frac{t'+h-t}{2h},&t'-h\leq t \leq t'+h,\\
 0,&\textrm{otherwise}.
\end{cases}
\end{align*}
Choose the test function $\phi=(v_\varepsilon-u_\varepsilon)_+\chi_{h}$.
By the assumptions made on $(u-v)_+$ near the lateral boundary of $\Omega_T$, we see that for $\varepsilon$ and $h$ small enough $\phi\in L_c^p(0,T;N^{1,p}_0(\Omega))\cap L^2(\Omega_T)$. 
Therefore, since $u$ is a superminimizer, by Lemma \ref{quasiminimizer after partial integration} we have
\begin{align*}
-\alpha\int_{\{\phi \neq 0\}}&\frac{\partial u_\varepsilon}{\partial t} (v_\varepsilon-u_\varepsilon)_+\chi_h\, d\nu+ \int_{\{\phi \neq 0\}}(g_u^p)_\varepsilon\,d\nu\\
&\leq  \int_{\{\phi \neq 0\}}(g_{u(x,t-s)+(v_\varepsilon-u_\varepsilon)\chi_h}^p)_\varepsilon\,d\nu.
\end{align*}
On the other hand, since $v$ is a subminimizer, we may use $-\phi$ as a test function, to obtain
\begin{align*}
\alpha\int_{\{\phi \neq 0\}} &\frac{\partial v_\varepsilon}{\partial t} (v_\varepsilon-u_\varepsilon)_+\chi_h\, d\nu+ \int_{\{\phi \neq 0\}}(g_v^p)_\varepsilon\,d\nu\\ 
&\leq  \int_{\{\phi \neq 0\}}(g_{v(x,t-s)+(u_\varepsilon-v_\varepsilon)\chi_h}^p)_\varepsilon\,d\nu.
\end{align*}
By adding the above two inequalities together we have
\begin{equation}\label{inequality before taking epsilon to zero}
\begin{split}
\alpha\int_{\{\phi\neq 0\}}\frac{1}{2} \frac{\partial}{\partial t} \left((v_\varepsilon-u_\varepsilon)_+^2\right)\chi_h \, d\nu+ \int_{\{\phi\neq 0\}} (g_u^p)_\varepsilon\, d\nu +\int_{\{\phi\neq 0\}} (g_v^p)_\varepsilon\, d\nu\\
\leq \int_{\{\phi \neq 0\}}(g_{u(x,t-s)+(v_\varepsilon-u_\varepsilon)\chi_h}^p)_\varepsilon\,d\nu+\int_{\{\phi \neq 0\}}(g_{v(x,t-s)-(v_\varepsilon+u_\varepsilon)\chi_h}^p)_\varepsilon\,d\nu.
\end{split}
\end{equation}
Next we note that by adding and subtracting, and then using Minkowski's inequality, and since $\chi_h$ does not depend on the spatial variable, we have
\begin{align*}
&\left(\int_{\{\phi \neq 0\}}g_{u(x,t-s)+(v_\varepsilon-u_\varepsilon)\chi_h}^p\,d\nu\right)_\varepsilon
\leq \left( \left( \left(\int_{\{\phi \neq 0\}}g_{v}^p\,d\nu\right)^{1/p}\right.\right.\\
&+ \left(\int_{\{\phi \neq 0\}}g_{u(x,t-s)-u}^p\,d\nu\right)^{1/p}+ \left(\int_{\{\phi \neq 0\}}g_{v_\varepsilon-v}^p\,d\nu\right)^{1/p}\\
&\left.\left.+\left(\int_{\{\phi \neq 0\}}g_{u-u_\varepsilon}^p\,d\nu\right)^{1/p}+\left(\int_{\{\phi \neq 0\}}g_{(v_\varepsilon-u_\varepsilon)}^p(\chi_h-1)^p\,d\nu\right)^{1/p}\right)^p\right)_\varepsilon.
\end{align*}
Lemma \ref{convergence of upper gradient} implies that
 \begin{align*}
 \limsup_{\varepsilon \rightarrow 0} \int_{\{\phi \neq 0\}}(g_{u(x,t-s)+v_\varepsilon-u_\varepsilon}^p)_\varepsilon\,d\nu \leq\int_{\{\varphi \neq 0\}}g_{v}^p\,d\nu,
 \end{align*}
 where we have denoted $\varphi=(v-u)_+\chi_{h}$. Similarly, we see that
 \begin{align*}
 \limsup_{\varepsilon \rightarrow 0} \int_{\{\phi \neq 0\}}(g_{v(x,t-s)-v_\varepsilon+u_\varepsilon}^p)_\varepsilon\,d\nu \leq\int_{\{\varphi \neq 0\}}g_{u}^p\,d\nu.
 \end{align*}
On the left hand side of  \eqref{inequality before taking epsilon to zero} we integrate by parts and use the definition of $\chi_h$ to obtain
\begin{align*}
\alpha\int_{\{\phi\neq 0\}}&\frac{1}{2} \left(\frac{\partial}{\partial t} (v_\varepsilon-u_\varepsilon)_+^2\right)\chi_h \, d\nu\\
&=\frac{\alpha}{4h}\int_{t'-h}^{t'+h}\int_{\Omega}(v_\varepsilon-u_\varepsilon)_+^2\,d\mu\,dt-\frac{\alpha}{2h}\int_{h}^{2h}\int_{\Omega}(v_\varepsilon-u_\varepsilon)_+^2\,d\mu\,dt
\end{align*}
Hence from \eqref{inequality before taking epsilon to zero} we obtain, after first taking the limit $\varepsilon\rightarrow 0$ and then $h\rightarrow 0$, and also taking into account  initial condition \eqref{initial condition},
\begin{align*}
\frac{\alpha}{4h}\int_{\Omega}&(v(x,t')-u(x,t'))_+^2\,d\mu +\int_{\{\varphi\neq 0\}} g_u^p\, d\nu +\int_{\{\varphi\neq 0\}} g_v^p\, d\nu\\
&\quad\leq \int_{\{\varphi\neq 0\}} g_v^p\, d\nu+ \int_{\{\varphi\neq 0\}} g_u^p\, d\nu
\end{align*}
for almost every $t'\in (0,T)$.
Since the upper gradient terms cancel each other, the above implies that for almost every $t'\in (0,T)$ we have 
\[
(v(x,t')-u(x,t'))_+=0
\]
at $\mu$-almost every $x\in \Omega$. This completes the proof.
\end{proof}
A parabolic minimizer is both a sub- and superminimizer, and so the comparison principle above immediately implies the following uniqueness result.

\begin{theorem}Let $u,v\in L^p(0,T;N^{1,p}(\Omega))\cap L^2(\Omega_T)$ be parabolic minimizers, both with the same constant $\alpha>0$ in \eqref{quasiminimizer definition}. 
Suppose that for almost every $t\in (0,T)$ we have $u-v \in N_0^{1,p}(\Omega)$, and suppose we have the initial condition  
\begin{align*}
\frac{1}{h}\int_0^h \int_{\Omega'} |v-u|^2\,d\nu \rightarrow 0, \qquad \textrm{ as }h\rightarrow 0.
\end{align*}
Then $u=v$ $\,\nu$-almost everywhere in $\Omega_T$.
\end{theorem}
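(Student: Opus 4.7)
The plan is to deduce this uniqueness theorem as a direct double application of the preceding comparison principle, so the argument is essentially a matter of verifying that the symmetric hypotheses here imply the asymmetric hypotheses required there, once with the roles $(u,v)$ and once with the roles swapped.

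First I would observe that every parabolic minimizer is simultaneously a parabolic superminimizer and a parabolic subminimizer with the same constant $\alpha$, since the defining inequality \eqref{quasiminimizer definition} with $K=1$ holds for all admissible $\phi \in \lip(\Omega_T)$, in particular for the nonnegative and the nonpositive ones. Thus I may regard $u$ as a superminimizer and $v$ as a subminimizer (and, later, vice versa).

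Next I would check the boundary hypotheses of the comparison theorem. For the lateral boundary, the assumption $u(\cdot,t)-v(\cdot,t)\in N_0^{1,p}(\Omega)$ for almost every $t\in(0,T)$ implies $(v(\cdot,t)-u(\cdot,t))_+\in N_0^{1,p}(\Omega)$, because $N_0^{1,p}(\Omega)$ is a lattice under the pointwise maximum/minimum with zero (truncation at $0$ preserves zero boundary values in the Newtonian sense). For the initial condition, since $(v-u)_+^2\le |v-u|^2$ pointwise, the hypothesis
\[
\frac{1}{h}\int_0^h\int_{\Omega'}|v-u|^2\,d\nu\longrightarrow 0 \quad\text{as }h\to 0
\]
immediately gives
\[
\frac{1}{h}\int_0^h\int_{\Omega}(v-u)_+^2\,d\nu\longrightarrow 0 \quad\text{as }h\to 0,
\]
assuming, as the notation in the statement suggests, that $\Omega'$ is read as $\Omega$. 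The comparison theorem then yields $u\ge v$ $\nu$-almost everywhere in $\Omega_T$.

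Interchanging the roles of $u$ and $v$, the same hypotheses (which are symmetric in $u$ and $v$, using $(u-v)_+^2\le|v-u|^2$ for the initial estimate) allow me to view $v$ as a superminimizer and $u$ as a subminimizer, and the comparison theorem gives $v\ge u$ $\nu$-almost everywhere in $\Omega_T$. Combining the two inequalities, $u=v$ $\nu$-almost everywhere in $\Omega_T$, as claimed. Since the real work is absorbed into the comparison principle, the only point requiring a sentence of care is the verification that one-sided boundary regularity $(v-u)_\pm\in N_0^{1,p}(\Omega)$ really does follow from the two-sided regularity $u-v\in N_0^{1,p}(\Omega)$; this is the only possible obstacle, and it is standard in the Newtonian theory.
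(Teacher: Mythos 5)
Your proposal is correct and follows exactly the route the paper takes: the paper states that a parabolic minimizer is both a sub- and superminimizer, so the comparison principle applied twice (once with the roles of $u$ and $v$ interchanged) immediately gives $u\ge v$ and $v\ge u$, hence $u=v$ $\nu$-almost everywhere. Your additional verifications --- that $(v-u)_+\in N_0^{1,p}(\Omega)$ follows from $u-v\in N_0^{1,p}(\Omega)$ and that $(v-u)_+^2\le|v-u|^2$ yields the initial condition --- are exactly the (routine) details the paper leaves implicit.
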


\begin{example}
Consider the functions $u$ and $v$ which are solutions of the one-dimensional heat equations
\[
\frac{\partial u}{\partial t}-u^{''}= 0
\qquad
\text{and}
\qquad 
\frac{\partial v}{\partial t}-av^{''}= 0,\qquad a>1,
\]
in the domain $(0,1)\times(0,1)$ with zero boundary values on the lateral boundary 
and the same nontrivial initial values. Denote $dz=dx\,dt$.
By the separation of variables we see that $u\ne v$, but they have the
same boundary values on the parabolic boundary. 

As above, we can show that $v$ satisfies
\[
\begin{split}
\int_U&v\frac{\partial\phi}{\partial t}\,dz
+a\int_U|v'|^2\,dz
\le a\int_U|v'||v'+\phi'|\,dz
\\
&\le\Big(a-\frac12\Big)\int_U|v'|^2\,dz
+\frac12\frac{a^2}{2a-1}\int_U|v'+\phi'|^2\,dz
\end{split}
\]
for every $U\Subset(0,1)\times(0,1)$ and $\phi\in C^\infty_0(U)$. 
This implies that
\[
2\int_Uv\frac{\partial\phi}{\partial t}\,dz
+\int_U|v'|^2\,dz
\le\frac{a^2}{2a-1}\int_U|v'+\phi'|^2\,dz
\]
for every $U\Subset(0,1)\times(0,1)$ and $\phi\in C^\infty_0(U)$ and hence
$v$ is a parabolic quasiminimizer with 
\[
K=\frac{a^2}{2a-1}>1.
\]
In the other hand,  the function $u$ satisfies
\[
2\int_Uu\frac{\partial\phi}{\partial t}\,dz
+\int_U|u'|^2\,dz
\le\int_U|u'+\phi'|^2\,dz
\le K\int_U|u'+\phi'|^2\,dz
\]
for every $U\Subset(0,1)\times(0,1)$ and $\phi\in C^\infty_0(U)$ and hence
$u$ is a parabolic quasiminimizer with the
same constant as $v$ and, consequently, both uniqueness and the comparison principle do not hold
for quasiminimizers, in general.
\end{example}

\def\cprime{$'$} \def\cprime{$'$}

\vspace{0.3cm}
\noindent
\small{\textsc{J.K.} and M.M.,}
\small{\textsc{Department of Mathematics},}
\small{\textsc{P.O. Box 11100},}
\small{\textsc{FI-00076 Aalto University},}
\small{\textsc{Finland}}\\
\footnotesize{\texttt{juha.k.kinnunen@aalto.fi, mathiasmasson@hotmail.com}}


\begin{thebibliography}{AAAAA}

\bibitem[B]{B06}
A.Bj\"orn.
\newblock A weak Kellog property for quasiminimizers.
\newblock {\em Comment. Math. Helv.},  81:809--825, 2006.

\bibitem[BB]{BB11}
A. Bj\"orn and J. Bj\"orn.
\newblock {\em Nonlinear Potential Theory on Metric Spaces}, volume~17 of {\em
  EMS Tracts in Mathematics}.
\newblock European Mathematics Society (EMS), Z\"urich, 2011.

\bibitem[GG1]{GG82}
M. Giaquinta and E. Giusti.
\newblock On the regularity of the minima of variational integrals.
\newblock {\em Acta Math.}, 148:31--46, 1982.

\bibitem[GG2]{GG84}
M. Giaquinta and E. Giusti.
\newblock Quasi-minima.
\newblock {\em Ann. Inst. H. Poincar\'e Anal. Non Lin\'eaire}, 1:79--107, 1984.

\bibitem[H]{Hein01}
J. Heinonen.
\newblock {\em Lectures on analysis on metric spaces}.
\newblock Universitext. Springer-Verlag, New York, 2001.

\bibitem[HK]{HeinKosk98}
J. Heinonen and P. Koskela.
\newblock Quasiconformal maps in metric spaces with controlled geometry.
\newblock {\em Acta Math.}, 181:1--61, 1998.


\bibitem[KKM]{KKM00}
T. Kilpel\"ainen, J. Kinnunen and O. Martio.
\newblock Sobolev spaces with zero boundary values on metric spaces.
\newblock {\em Potential Anal.}, 12:233--247, 2000.

\bibitem[KL]{KL06}
J. Kinnunen and P. Lindqvist. 
\newblock Pointwise behaviour of semicontinuous supersolutions to a quasilinear parabolic equation. 
\newblock {\em Ann. Mat. Pura Appl. (4)}, 185(3):411--435, 2006.

\bibitem[KMMP]{KMMP12}
J. Kinnunen, N. Marola, M. Miranda jr. and F. Paronetto
\newblock Harnack's inequality for parabolic De Giorgi classes in metric spaces.
\newblock {\em Advances in Differential Eq.}, to appear.

\bibitem[KM]{KM03}
J. Kinnunen and O. Martio.
\newblock Potential theory of quasiminimizers.
\newblock {\em Ann. Acad. Sci. Fenn. Math.}, 28:459--490, 2003.

\bibitem[KS]{KS01}
J. Kinnunen and N. Shanmugalingam.
\newblock Regularity of quasiminimizers on metric spaces.
\newblock {\em Manuscripta Math.}, 105(2):401--423, 2001.

\bibitem[KKP]{KKP10}
R. Korte, T. Kuusi, and M. Parviainen. 
\newblock  A connection between a general class of superparabolic functions and supersolutions.
\newblock {\em J. Evol. Eq.}, 10(1):1--20, 2010.

\bibitem[KKS]{KKS09}
R. Korte, T. Kuusi and J. Siljander.
\newblock  Obstacle problem for nonlinear parabolic equations. 
\newblock {\em J. Differential Equations}, 249(9):3668--3680, 2009.

\bibitem[MMPP]{MMPP13}
M. Masson, M. Miranda Jr, F. Paronetto and M. Parviainen.
\newblock Local higher integrability for parabolic quasiminimizers in metric spaces.
\newblock {\em In preparation}

\bibitem[MS]{MS11}
M. Masson and J. Siljander.
\newblock H\"older regularity for parabolic De Giorgi classes in metric measure
 spaces.
\newblock {\em  Manuscripta Mathematica.}, to appear.

\bibitem[S1]{Shan00}
N. Shanmugalingam.
\newblock Newtonian spaces: an extension of {S}obolev spaces to metric measure
  spaces.
\newblock {\em Rev. Mat. Iberoamericana}, 16(2):243--279, 2000.

\bibitem[S2]{Shan01}
N. Shanmugalingam.
\newblock Harmonic functions on metric spaces.
\newblock {\em Illinois J. Math.}, 45:1021--1050, 2001.

\bibitem[W]{W87}
W. Wieser.
\newblock Parabolic Q-minima and minimal solutions to variational flow.
\newblock {\em Manuscripta Math.}, 59:63--107, 1987.

\bibitem[Z1]{Z93}
S. Zhou.
\newblock On the local behavior of parabolic Qminima.
\newblock {\em J. Partial Differential Equations}, 6:255--272, 1993.

\bibitem[Z2]{Z94}
S. Zhou.
\newblock Parabolic Qminima and their applications.
\newblock {\em J. Partial Differential Equations}, 7:289--322, 1994.

\end{thebibliography}
\end{document}